\newtheorem{thm}{Theorem}
\newtheorem{theorem}{Theorem}[section]
\newtheorem{lemma}[theorem]{Lemma}
\newtheorem{defn}{Definition}
\newtheorem{exple}[theorem]{Example}
\newcommand{\N}{{\rm I} \hspace{-0.03in} {\rm N}}
\title{Contraction Principles in $M_s$-metric Spaces }
\author {N. Mlaiki$^{1}$, N. Souayah$^{2},$ K. Abodayeh$^{3},$ T. Abdeljawad$^{4}$
   \\ {\it Department of Mathematical Sciences, Prince Sultan University$^{1,}$$^{3,}$$^{4}$}
   \\  {\it Department of Natural Sciences, Community College, King Saud University$^{2}$}
    \\ {\it Riyadh, Saudi Arabia 11586 }
   \\ E-mail:    nmlaiki@psu.edu.sa$^{1}$\\
                 nsouayah@ksu.edu.sa$^{2}$\\
                 kamal@psu.edu.sa$^{3}$\\
                 tabdeljawad@psu.edu.sa$^{4}$ }
\date{}
\begin{document}
\maketitle{}
\begin{abstract}

In this paper, we give an interesting extension of the partial $S$-metric space which was introduced \cite{Nabil1} to the
$M_{s}$-metric space. Also, we prove the existence and uniqueness of a
fixed point for a self mapping on an $M_{s}$-metric space under different contraction principles.
\end{abstract}

\section{Introduction}

Many researchers over the years proved many interesting results on the  existence of a
fixed point for a self mapping on different types of metric spaces for
example, see (\cite{M2}, \cite{M3}, \cite{M4}, \cite{M5}, \cite{M6}, \cite{M7}, \cite{M8}, \cite{M9}, \cite{M10}, \cite{Postolache1}, \cite{Postolache2}, \cite{Postolache3}.)
The idea behind this paper was inspired by the work of Asadi in \cite{M1}.
He gave a more general extension of almost any metric space with two dimensions, and
that is not just by defining the self "distance" in a metric as in partial metric spaces \cite{Val2005,IATOP2010,IAFTPA2011, Marshad, TEH2012}, but
he assumed that is not necessary that the self "distance" is less than the value of the metric between two different elements.

In \cite{Nabil1}, an extension of $S$-metric
spaces to a partial $S$-metric spaces was introduced. Also, they showed that every
$S$-metric space is a partial $S$-metric space, but not every partial
$S$-metric space is an $S$-metric space. In our paper, we
introduce the concept of $M_{s}$- metric spaces which an extension of
the partial $S$-metric spaces in which we will prove some fixed point results.

First, we remind the reader of the definition of a partial $S$-metric space

\begin{defn}\cite{Nabil1}
Let $X$ be a nonempty set. A partial S-metric on $X$ is a
function $S_{p}:X^{3}\rightarrow [0,\infty)$ that satisfies the
following
conditions, for all $x,y,z,t \in X:$\\
\begin{itemize}
\item[(i)] $x=y$ if and only if $S_{p}(x,x,x)=S_{p}(y,y,y)=S_{p}(x,x,y)$,
\item[(ii)] $S_{p}(x,y,z)\le S_{p}(x,x,t)+S_{p}(y,y,t)+S_{p}(z,z,t)-S_{p}(t,t,t)$,
\item[(iii)] $S_{p}(x,x,x)\le S_{p}(x,y,z)$,
\item[(iv)] $S_{p}(x,x,y)=S_{p}(y,y,x).$
\end{itemize}
The pair $(X,S_{p})$ is called a partial S-metric space.
\end{defn}

Next, we give the definition of an $M_{s}$-metric space, but first we
introduce the following notation. \\

\noindent{\bf Notation.}
\begin{enumerate}
  \item $m_{{s}_{x,y,z}}:= \min \{ m_{s}(x,x,x),m_{s}(y,y,y),m_{s}(z,z,z) \}$
  \item $M_{{s}_{x,y,z}}:= \max \{ m_{s}(x,x,x),m_{s}(y,y,y),m_{s}(z,z,z) \}$
\end{enumerate}

\begin{defn} \label{def of Ms}
 An $M_s$-metric on a nonempty set X is
a function $m_{s} : X^{3} \rightarrow \mathbb{R}^{+}$ such that for all $x, y, z, t \in X$, the following conditions

are satisfied: \\
\begin{enumerate}
  \item $m_{s}(x,x,x)=m_{s}(y,y,y)=m_{s}(x,x,y)$ if and only if $x=y,$
  \item $m_{{s}_{x,y,z}} \le m_{s}(x,y,z),$
  \item $m_{s}(x,x,y)=m_{s}(y,y,x),$
  \item $(m_{s}(x,y,z)-m_{{s}_{x,y,z}})\le (m_{s}(x,x,t)-m_{{s}_{x,x,t}})+(m_{s}(y,y,t)-m_{{s}_{y,y,t}})+(m_{s}(z,z,t)-m_{{s}_{z,z,t}}).$
\end{enumerate}

The pair $(X,m_{s})$ is called an $M_{s}$-metric space.
 Notice that the condition $m_s(x,x,x)=m_s(y,y,y)=m_s(z,z,z)=m_s(x,y,z) \Leftrightarrow x=y=z$ implies (1) above.
\end{defn}
It is a straightforward to verify that every partial $S$-metric space is an $M_s$-metric space but the converse is not true. The following is an example of an $M_{s}$-metric which is not a partial $S$-metric space.

\begin{exple}
Let $X=\{1,2,3\}$ and define the $M_{s}$-metric space $m_{s}$ on $X$ by
$m_{s}(1,2,3)= 6 $,\\
$m_{s}(1,1,2)=m_{s}(2,2,1)=m_{s}(1,1,1)=8$,\\
$m_{s}(1,1,3)=m_{s}(3,3,1)=m_{s}(3,3,2)=m_{s}(2,2,3)=7$,\\
$m_{s}(2,2,2)=9, \text{   and   } m_{s}(3,3,3)=5$.\\
It is not difficult to see that $(X,m_{s})$ is an $M_{s}$-metric space, but since $m_{s}(1,1,1)\not\le m_{s}(1,2,3)$ we deduce that $m_{s}$ is not a partial $S$-metric space.

\end{exple}

\begin{defn}
Let $(X, m_{s})$ be an $M_s$-metric space. Then:
\begin{enumerate}
  \item A sequence $\{x_{n}\}$ in $X$ converges to a point $x$ if and
only if $$\lim_{n\rightarrow \infty}(m_{s}(x_{n},x_{n},x)-m_{{s}{x_{n},x_{n},x}})=0.$$
  \item A sequence $\{x_{n}\}$ in $X$ is said to be $M_{s}$-Cauchy sequence if and only if $$\lim_{n,m\rightarrow \infty}(m_{s}(x_{n},x_{n},x_{m})-m_{{s}{x_{n},x_{n},x_{m}}}), \text{   and    } \lim_{n\rightarrow \infty}(M_{{s}{x_{n},x_{n},x_{m}}}-m_{{s}{x_{n},x_{n},x_{m}}}) $$
exist and finite.
  \item An $M_s$-metric space is said to be complete if every $M_{s}$-Cauchy sequence $\{x_{n}\}$ converges to a point $x$ such
that $$\lim_{n\rightarrow
\infty}(m_{s}(x_{n},x_{n},x)-m_{{s}{x_{n},x_{n},x}})=0 \text{   and    } \lim_{n\rightarrow \infty}(M_{{s}{x_{n},x_{n},x}}-m_{{s}{x_{n},x_{n},x}})=0.$$
\end{enumerate}
\end{defn}
A ball in the $M_s-$metric $(X, m_s)$ space with center $x \in X$ and radius $\eta >0$ is defined by
$$ B_{s}[x,\eta]=\{ y\in X\mid m_{s}(x,x,y)-m_{{s}{x,x,y}}\le
\eta\}.$$
The topology of $(X, M_s)$ is generated by means of the basis $\beta = \{B_{s}[x,\eta]: \eta>0\}$.
\begin{lemma} \label{sequential}
Assume $x_n\rightarrow x$ and $y_n\rightarrow y$ as $n\rightarrow \infty$ in an $M_s-$metric space $(X, m_s)$. Then,
$$\lim_{n\rightarrow \infty} (m_s(x_n,x_n,y_n)-m_{sx_n,x_n,y_n})=m_s(x,x,y)-m_{sx,x,y}$$
\end{lemma}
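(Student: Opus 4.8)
The plan is to reduce the statement to a two-variable, triangle-type inequality for the quantity
$$D(a,b) := m_s(a,a,b) - m_{s_{a,a,b}},$$
which is exactly the expression appearing in the limit (taking $a=x_n,\,b=y_n$ on the left and $a=x,\,b=y$ on the right). First I would record two consequences of the axioms. Since $m_{s_{a,a,b}} = \min\{m_s(a,a,a),m_s(b,b,b)\} = m_{s_{b,b,a}}$, the symmetry axiom (3) gives $D(a,b)=D(b,a)$, and axiom (2) gives $D(a,b)\ge 0$. The main tool is axiom (4): specializing it by setting the first two slots equal and invoking the symmetry just noted, I obtain, for all $a,b,t\in X$,
$$D(a,b) \le 2\,D(a,t) + D(b,t).$$

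Next I would run a sandwich argument in both directions. For the upper bound, apply the displayed inequality with $(a,b,t)=(x_n,y_n,x)$ to get $D(x_n,y_n)\le 2D(x_n,x)+D(y_n,x)$, then apply it to the last term with $(a,b,t)=(y_n,x,y)$ to get $D(y_n,x)\le 2D(y_n,y)+D(x,y)$. Combining yields
$$D(x_n,y_n) \le 2\,D(x_n,x) + 2\,D(y_n,y) + D(x,y).$$
By the definition of convergence, $D(x_n,x)\to 0$ and $D(y_n,y)\to 0$, hence $\limsup_n D(x_n,y_n)\le D(x,y)$. For the lower bound I would symmetrically bound $D(x,y)\le 2D(x,x_n)+D(y,x_n)$ and $D(y,x_n)\le 2D(y,y_n)+D(x_n,y_n)$, which after rearranging gives
$$D(x_n,y_n) \ge D(x,y) - 2\,D(x_n,x) - 2\,D(y_n,y),$$
so $\liminf_n D(x_n,y_n)\ge D(x,y)$. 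The two bounds together force $\lim_n D(x_n,y_n)=D(x,y)$, which is the assertion.

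The step I expect to require the most care is the derivation of the inequality $D(a,b)\le 2D(a,t)+D(b,t)$ from axiom (4): one must verify that the three self-distance correction terms $m_{s_{\cdot}}$ collapse correctly and that the symmetry $D(a,t)=D(t,a)$ is applied in the right slots, since the asymmetric roles of the three arguments in an $S$-type metric are precisely where such manipulations tend to break. Once that inequality is secured, the factor $2$ in front of the vanishing terms $D(x_n,x)$ and $D(y_n,y)$ is harmless and the squeeze closes at once; I note in particular that only the single convergence condition $D(x_n,x)\to 0$ is needed here, not the auxiliary $M_s$-condition from the definition of completeness.
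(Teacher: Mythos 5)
Your proof is correct and takes essentially the same route as the paper: the paper's one-line argument simply asserts, as a consequence of axiom (4), the two-sided bound $\bigl|\bigl(m_s(x_n,x_n,y_n)-m_{s_{x_n,x_n,y_n}}\bigr)-\bigl(m_s(x,x,y)-m_{s_{x,x,y}}\bigr)\bigr|\le 2\bigl[\bigl(m_s(x_n,x_n,x)-m_{s_{x_n,x_n,x}}\bigr)+\bigl(m_s(y_n,y_n,y)-m_{s_{y_n,y_n,y}}\bigr)\bigr]$, which is exactly the sandwich you obtain by chaining your inequality $D(a,b)\le 2D(a,t)+D(b,t)$ twice in each direction. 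You merely make explicit the specializations of axiom (4) and the symmetry $D(a,b)=D(b,a)$ that the paper leaves implicit.
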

\begin{proof}
The proof follows by the inequality $(4)$  in Definition \ref{def of Ms}. Indeed, we have

\begin{eqnarray}
\nonumber
  |(m_s(x_n,x_n,y_n)-m_{sx_n,x_n,y_n})-(m_s(x,x,y)-m_{sx,x,y})|  &\leq& 2[(m_s(x_n,x_n,x)-m_{sx_n,x_n,x})\\
   &+&  (m_s(y_n,y_n,y)-m_{sy_n,y_n,y})]
\end{eqnarray}

\end{proof}
\section{Fixed Point Theorems}

In this section, we consider some results about the existence and the uniqueness of  fixed point for  self mappings on an $M_{s}$-metric space,
under different contractions principles.

\begin{thm}\label{one}
Let $(X, m_{s})$ be a complete $M_s$-metric space and $T $ be a self mapping on $X$ satisfying the following
condition:
\begin{equation}\label{cont1}
  m_{s}(T x,T x, T y) \le k m_{s}(x, x,y),
\end{equation}
for all $x, y \in
X,$ where $k\in [0,1).$ Then T has a unique fixed point $u$. Moreover, $m_{s}(u,u,u)=0.$

\end{thm}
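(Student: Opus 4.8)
The plan is to run the Picard iteration and mimic the proof of the classical Banach contraction principle, with extra bookkeeping for the $\min$ and $\max$ terms forced by the $M_s$-structure. Fix $x_0\in X$ and set $x_{n+1}=Tx_n$. First I would record two geometric estimates read off directly from \eqref{cont1}: taking $x=y=x_{n-1}$ gives $m_s(x_n,x_n,x_n)\le k\,m_s(x_{n-1},x_{n-1},x_{n-1})$, so the self-distances satisfy $m_s(x_n,x_n,x_n)\le k^n m_s(x_0,x_0,x_0)\to 0$; taking $x=x_{n-1},\,y=x_n$ gives $m_s(x_n,x_n,x_{n+1})\le k^n m_s(x_0,x_0,x_1)\to 0$. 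Writing $b_n:=m_s(x_n,x_n,x_{n+1})-m_{s_{x_n,x_n,x_{n+1}}}$, we then have the summable bound $0\le b_n\le m_s(x_n,x_n,x_{n+1})\le k^n m_s(x_0,x_0,x_1)$.

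Next I would show $\{x_n\}$ is $M_s$-Cauchy. For $m>n$, apply inequality (4) of Definition \ref{def of Ms} with $x=y=x_n$, $z=x_m$, $t=x_{n+1}$, and use the symmetry (condition (3)) to rewrite $m_s(x_m,x_m,x_{n+1})=m_s(x_{n+1},x_{n+1},x_m)$. Setting $a_{n,m}:=m_s(x_n,x_n,x_m)-m_{s_{x_n,x_n,x_m}}$, this produces the recursion $a_{n,m}\le 2b_n+a_{n+1,m}$. Iterating down to the diagonal, where $a_{m,m}=m_s(x_m,x_m,x_m)-m_s(x_m,x_m,x_m)=0$, yields $a_{n,m}\le 2\sum_{j=n}^{m-1}b_j\le \frac{2k^n}{1-k}m_s(x_0,x_0,x_1)$, so the first Cauchy limit vanishes. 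The second Cauchy limit is immediate, since both $M_{s_{x_n,x_n,x_m}}$ and $m_{s_{x_n,x_n,x_m}}$ lie between $0$ and $\max\{m_s(x_n,x_n,x_n),m_s(x_m,x_m,x_m)\}\to 0$, so their difference tends to $0$.

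By completeness there is a limit $u$ with $m_s(x_n,x_n,u)-m_{s_{x_n,x_n,u}}\to 0$ and $M_{s_{x_n,x_n,u}}-m_{s_{x_n,x_n,u}}\to 0$. I would first extract $m_s(u,u,u)=0$ from the second condition: because $m_s(x_n,x_n,x_n)\to 0$, the difference $M_{s_{x_n,x_n,u}}-m_{s_{x_n,x_n,u}}$ has limit $m_s(u,u,u)$, which must therefore be $0$. This forces $m_{s_{x_n,x_n,u}}=0$, so the first condition reduces to $m_s(x_n,x_n,u)\to 0$. To prove $Tu=u$, apply inequality (4) with $x=y=u$, $z=Tu$, $t=x_{n+1}$: the right-hand side is controlled by $m_s(u,u,x_{n+1})$ and by $m_s(Tu,Tu,x_{n+1})=m_s(x_{n+1},x_{n+1},Tu)\le k\,m_s(x_n,x_n,u)$, both $\to 0$, giving $m_s(u,u,Tu)-m_{s_{u,u,Tu}}\le 0$; with condition (2) this gives $m_s(u,u,Tu)=m_{s_{u,u,Tu}}=0$ using $m_s(u,u,u)=0$. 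Since also $m_s(Tu,Tu,Tu)\le k\,m_s(u,u,u)=0$, condition (1) yields $u=Tu$. Uniqueness is then routine: if $Tu=u$ and $Tv=v$, then \eqref{cont1} gives $m_s(u,u,v)\le k\,m_s(u,u,v)$ and likewise $m_s(u,u,u)\le k\,m_s(u,u,u)$ and $m_s(v,v,v)\le k\,m_s(v,v,v)$, forcing all three to vanish, whence $u=v$ by condition (1).

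The step I expect to be the main obstacle is the Cauchy estimate: the asymmetry of inequality (4) (two copies of the repeated argument, producing the factor $2b_n$) must be combined with the symmetry relation (3) so that the recursion closes on the diagonal term $a_{m,m}=0$, and one must separately verify the second, $\min$–$\max$ Cauchy condition that has no analogue in an ordinary metric space.
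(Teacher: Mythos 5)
Your proof is correct, but it takes a genuinely different route from the paper's. For the Cauchy step the paper does not sum a series at all: it passes to the iterate $F=T^{n_0}$ with $k^{n_0}<\epsilon/8$, chooses $l$ with $m_s(x_{l+1},x_{l+1},x_l)<\epsilon/8$ and $m_s(x_l,x_l,x_l)<\epsilon/4$, and shows that $F$ maps the ball $B_s[x_l,\eta]$ with $\eta=\frac{\epsilon}{2}+m_s(x_l,x_l,x_l)$ into itself, so the entire tail of the orbit sits in this ball and the estimate $m_s(x_n,x_n,x_m)<\epsilon$ follows from ball membership rather than from any telescoping. You instead run the classical Banach-style chaining through inequality (4): the recursion $a_{n,m}\le 2b_n+a_{n+1,m}$, closed at the diagonal $a_{m,m}=0$ (valid, since $m_{s_{x_m,x_m,x_{n+1}}}=m_{s_{x_{n+1},x_{n+1},x_m}}$ because the minimum depends only on the underlying set of points), together with $b_j\le k^j\,m_s(x_0,x_0,x_1)$, yields the explicit rate $a_{n,m}\le \frac{2k^n}{1-k}\,m_s(x_0,x_0,x_1)$. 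Your version is more elementary and quantitative --- no $\epsilon/8$ bookkeeping, no auxiliary map $F$ --- and it makes transparent why the factor $2$ produced by the doubled argument in (4) is harmless: it enters additively once per step, so the geometric series still converges; the paper's ball-invariance device, by contrast, is the kind of argument that would survive even in settings where such chaining cannot be summed. You also diverge at the fixed-point step: the paper invokes its Lemma \ref{sequential} together with $Tx_n\to Tu$ to obtain $m_s(Tu,Tu,u)=m_{s_{Tu,Tu,u}}$, whereas you apply (4) directly with $t=x_{n+1}$ and bypass the lemma; both arguments then finish the same way, extracting $m_s(u,u,u)=0$ from $\lim_{n\to\infty}|m_s(x_n,x_n,x_n)-m_s(u,u,u)|=0$ and concluding $Tu=u$ from condition (1) of Definition \ref{def of Ms}. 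A small bonus of your write-up: in the uniqueness step you deduce $(1-k)\,m_s(u,u,v)\le 0$, which avoids the paper's strict chain $m_s(u,u,v)<m_s(u,u,v)$, an inequality that is vacuous precisely when the value is $0$.
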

\begin{proof}
Since $k\in[0,1)$, we can choose a natural number $n_{0}$ such that
for a given $0<\epsilon <1,$ we have
$k^{n_{0}}<\dfrac{\epsilon}{8}.$ Let $T^{n_{0}}\equiv F$ and
$F^{i}x_{0}=x_{i}$ for all natural numbers $i$, where $x_{0} $ is arbitrary. Hence, for all $x,y \in X$, we have

$$
m_{s}(Fx,Fx,Fy)=m_{s}(T^{n_{0}}x,T^{n_{0}}x,T^{n_{0}}y)\le k^{n_{0}} m_{s}(x,x,y).
$$
For any $i$, we have
\begin{align*}
m_{s}(x_{i+1},x_{i+1},x_{i}) & = m_{s}(Fx_{i},Fx_{i},Fx_{i-1}) \\& \le
k^{n_{0}}m_{s}(x_{i},x_{i},x_{i-1}) \\& \le k^{n_{0} +i}m_{s}(x_{1},x_{1},x_{0})\rightarrow
0\ as \  i \rightarrow \infty.
\end{align*}
Similarly, by (\ref{cont1}) we have $m_s(x_i,x_i,x_i)\rightarrow 0$ as $i\rightarrow \infty$.
Thus, we choose $l$ such that
$$
m_{s}(x_{l+1},x_{l+1},x_{l})< \frac{\epsilon}{8}   \text{             and         }      m_{s}(x_{l},x_{l},x_{l})< \frac{\epsilon}{4}  .
$$
Now, let $\eta=\frac{\epsilon}{2}+m_{s}(x_{l},x_{l},x_{l})$. Define the set
$$ B_{s}[x_{l},\eta]=\{ y\in X\mid m_{s}(x_{l},x_{l},y)-m_{{s}{x_{l},x_{l},y}}\le
\eta\}.$$
Note that, $x_{l} \in B_{s}[x_{l},\eta]$. Therefore $B_{s}[x_{l},\eta]\neq \emptyset.$ Let $z\in
B_{s}[x_{l},\eta]$ be arbitrary. Hence,
\begin{align*}
m_{s}(Fz,Fz,Fx_{l})& \le k^{n_{0}} m_{s}(z,z,x_{l})\\ & \le
\frac{\epsilon}{8}[\frac{\epsilon}{2}+m_{{s}{z,z,x_{l}}}+m_{s}(x_{l},x_{l},x_{l})] \\ & <
\frac{\epsilon}{8}[1+2m_{s}(x_{l},x_{l},x_{l})].\end{align*}
Also, we know that $ m_{s}(Fx_{l},Fx_{l},x_{l})=m_{s}(x_{l+1},x_{l+1},x_{l})<
\dfrac{\epsilon}{8}.$
Therefore,
\begin{align*}
m_{s}(Fz,Fz,x_{l})-m_{{s}{Fz,Fz, x_{l}}}& \le
2[(m_{s}(Fz,Fz,Fx_{l})-m_{{s}{Fz,Fz,Fx_{l}}})]+(m_{s}(Fx_{l},Fx_{l},x_{l})-m_{{s}{Fx_{l},Fx_{l},x_{l}}})\\
& \le 2m_{s}(Fz,Fz,Fx_{l})+m_{s}(Fx_{l},Fx_{l},x_{l})]\\
&\le 2\frac{\epsilon}{8}(1+2m_{s}(x_{l},x_{l},x_{l}))+\frac{\epsilon}{8}\\
&= \frac{\epsilon}{4}+\frac{\epsilon}{8}+ \frac{\epsilon}{2} m_{s}(x_{l},x_{l},x_{l})\\
&< \frac{\epsilon}{2}+m_{s}(x_{l},x_{l},x_{l}).
\end{align*}

Thus, $Fz \in B_{b}[x_{l},\eta]$ which implies that $F$ maps
$B_{b}[x_{l},\eta]$ into itself. Thus, by repeating this process
we deduce that for all $n\ge 1$ we have $F^{n}x_{l}\in
B_{b}[x_{l},\eta]$ and that is $x_{m}\in B_{b}[x_{l},\eta]$
for all $m\ge l.$ Therefore, for all $m>n\ge l$ where $n=l+i$ for some $i$
\begin{align*}
m_{s}(x_{n},x_{n},x_{m})&=m_{s}(Fx_{n-1},Fx_{n-1},Fx_{m-1})\\
& \le k^{n_{0}}m_{s}(x_{n-1},x_{n-1},x_{m-1})\\
& \le  k^{2n_{0}}m_{s}(x_{n-2},x_{n-2},x_{m-2})\\
&\vdots \\
&\le k^{in_{0}}m_{s}(x_{l},x_{l},x_{m-i})\\
& \le m_{s}(x_{l},x_{l},x_{m-i})\\
&\le \frac{\epsilon}{2}+m_{{s}{x_{l},x_{l},x_{m-i}}}+m_{s}(x_{l},x_{l},x_{l})\\
& \le \frac{\epsilon}{2}+2m_{s}(x_{l},x_{l},x_{l}) .
\end{align*}
 Also, we have $m_{s}(x_{l},x_{l},x_{l})<\frac{\epsilon}{4},$ which implies that $m_{s}(x_{n},x_{n},x_{m})<\epsilon$ for all $m>n>l,$ and thus
 $m_{s}(x_{n},x_{n},x_{m})-m_{{s}{x_{n},x_{n},x_{m}}}<\epsilon$ for all
$m>n>l.$ By the contraction condition (\ref{cont1}) we see that the sequence $\{m_s(x_n,x_n,x_n)\}$ is decreasing and hence, for all
$m>n>l$,  we have
\begin{align*}
M_{{s}{x_{n},x_{n},x_{m}}}-m_{{s}{x_{n},x_{n},x_{m}}}&\le M_{{s}{x_{n},x_{n},x_{m}}} \\
& = m_{s}(x_{n},x_{n},x_{n}) \\
&\le k m_{s}(x_{n-1},x_{n-1},x_{n-1})\\
&\vdots \\
&\le k^{n}m_{s}(x_{0},x_{0},x_{0}) \rightarrow 0 \text{  as   } n\rightarrow \infty.
\end{align*}
Thus, we deduce that
$$
\lim_{n,m\rightarrow \infty}(m_{s}(x_{n},x_{n},x_{m})-m_{{s}{x_{n},x_{n},x_{m}}})=0, \text{   and    } \lim_{n\rightarrow \infty}(M_{{s}{x_{n},x_{n},x_{m}}}-m_{{s}{x_{n},x_{n},x_{m}}})=0 $$
Hence, the sequence $\{x_{n}\}$ is an $M_{s}$-Cauchy. Since
$X$ is complete, there exists $u\in X $ such that
$$
\lim_{n\rightarrow \infty} m_{s}(x_{n},x_{n},u)-m_{{s}{x_{n},x_{n},u}}=0,~~\lim_{n\rightarrow \infty} M_{sx_{n},x_{n},u}-m_{{s}{x_{n},x_{n},u}}=0
$$
The contraction condition (\ref{cont1}) implies that $m_s(x_n,x_n,x_n)\rightarrow 0$ as $n\rightarrow \infty$. Moreover, notice that
$$\lim_{n\rightarrow \infty} M_{sx_{n},x_{n},u}-m_{{s}{x_{n},x_{n},u}}=\lim_{n\rightarrow \infty}|m_s(x_n,x_n,x_n)-m_s(u,u,u)|=0,$$ and hence  $m_s(u,u,u)=0$.
Since $x_n\rightarrow u$, $m_s(u,u,u)=0$ and $m_s(x_n,x_n,x_n)\rightarrow 0$ as $n\rightarrow \infty$  then $\lim_{n\rightarrow\infty}m_s(x_n,x_n,u)= \lim_{n\rightarrow\infty} m_{sx_n,x_n,u}=0$.
Since $m_s(Tx_n,Tx_n,Tu)\leq k m_s(x_n,x_n,u)\rightarrow 0 $ as   $n\rightarrow \infty$, then $Tx_n\rightarrow Tu$.

Now, we show that $Tu=u.$ By Lemma \ref{sequential} and that $Tx_n\rightarrow Tu$ and $x_{n+1}=Tx_n\rightarrow u$, we have

\begin{align*}
\lim_{n\rightarrow \infty} m_{s}(x_{n},x_{n},u)-m_{{s}{x_{n},x_{n},u}}&=0\\
&= \lim_{n\rightarrow \infty} m_{s}(x_{n+1},x_{n+1},u)-m_{{s}{x_{n+1},x_{n+1},u}}\\
&=\lim_{n\rightarrow \infty} m_{s}(Tx_{n},Tx_{n},u)-m_{{s}{Tx_{n},Tx_{n},u}} \\
&= m_{s}(u,u,u)-m_{{s}{Tu,Tu,u}}\\
&= m_{s}(Tu,Tu,u)-m_{{s}{Tu,Tu,u}}.
\end{align*}
Hence, $m_{s}(Tu,Tu,u)= m_{{s}{Tu,Tu,u}}=m_s(u,u,u)$ , but also by the contraction condition (\ref{cont1}) we see that  $m_{{s}{Tu,Tu,u}}=m_s(Tu,Tu,Tu)$ . Therefore, (2) in  Definition \ref{def of Ms}  implies that $Tu=u.$ \\

To prove the uniqueness of the fixed point $u,$
assume that $T$ has two fixed points $u,v \in X;$ that is, $Tu=u$
and $Tv=v.$ Thus,
$$m_{s}(u,u,v)=m_{s}(Tu,Tu,Tv)\le km_{s}(u,u,v)<m_{s}(u,u,v),$$

$$m_{s}(u,u,u)=m_{s}(Tu,Tu,Tu)\le km_{s}(u,u,u)<m_{s}(u,u,u),$$

and
$$m_{s}(v,v,v)=m_{s}(Tv,Tv,Tv)\le km_{s}(v,v,v)<m_{s}(v,v,v),$$

which implies that $m_{s}(u,u,v)=0=m_s(u,u,u)=m_s(v,v,v),$ and hence $u=v$ as desired.
Finally,assume that $u$ is a fixed point of $T.$ Then applying the contraction condition (\ref{cont1}) with $k \in [0,1)$, implies that
\begin{align*}
m_{s}(u,u,u) &=m_{s}(Tu,Tu,Tu)\\
&\le k m_{s}(u,u,u)\\
&\vdots \\
&\le k^n m_{s}(u,u,u)
\end{align*}
Taking the limit as $n$ tends to infinity, implies that is $m_{s}(u,u,u)=0.$
\end{proof}

In the following result, we prove the existence and uniqueness of a fixed point for a self mapping in $M_{s}$-metric space, but under a more general contraction.
\begin{thm}\label{two}
Let $(X, m_{s})$ be a complete $M_{s}$-metric space and $T $ be a self mapping on $X$ satisfying the following
condition:
\begin{equation}\label{cont2}
  m_{s}(T x,T x, T y) \le \lambda[ m_{s}(x,x, Tx)+ m_{s}(y,y,Ty)],
\end{equation}
for all $ x, y \in X,$ where $\lambda \in [0,\frac{1}{2}). $ Then T has a unique fixed point $u,$ where $m_{s}(u,u,u)=0.$
\end{thm}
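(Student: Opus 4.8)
The plan is to imitate the Picard-iteration scheme of Theorem \ref{one}, adapted to the Kannan-type estimate (\ref{cont2}). I fix an arbitrary $x_0\in X$ and set $x_{n+1}=Tx_n$. Substituting $x=x_{n-1}$, $y=x_n$ into (\ref{cont2}) gives
$$m_s(x_n,x_n,x_{n+1})\le \lambda\big[m_s(x_{n-1},x_{n-1},x_n)+m_s(x_n,x_n,x_{n+1})\big],$$
so that $m_s(x_n,x_n,x_{n+1})\le h\, m_s(x_{n-1},x_{n-1},x_n)$ with $h=\lambda/(1-\lambda)$; since $\lambda<\tfrac12$ we have $h<1$, whence $m_s(x_n,x_n,x_{n+1})\le h^n m_s(x_0,x_0,x_1)\to 0$. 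Taking $x=y=x_{n-1}$ in (\ref{cont2}) yields $m_s(x_n,x_n,x_n)\le 2\lambda\, m_s(x_{n-1},x_{n-1},x_n)$, so the self-distances $m_s(x_n,x_n,x_n)\to 0$ as well.

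Next I would show $\{x_n\}$ is $M_s$-Cauchy. Writing $a_{n,m}=m_s(x_n,x_n,x_m)-m_{sx_n,x_n,x_m}$, which is symmetric in $n,m$ by condition (3) of Definition \ref{def of Ms}, I apply inequality (4) to the entries $x_n,x_n,x_m$ with auxiliary point $t=x_{n+1}$ to obtain $a_{n,m}\le 2a_{n,n+1}+a_{n+1,m}$. Iterating this from $n$ up to $m$, and using $a_{k,k+1}\le m_s(x_k,x_k,x_{k+1})\le h^k m_s(x_0,x_0,x_1)$ together with $a_{m,m}=0$, telescopes to a geometric tail $a_{n,m}\le \frac{2h^n}{1-h}m_s(x_0,x_0,x_1)\to 0$, which is the first Cauchy condition. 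The second follows from $M_{sx_n,x_n,x_m}-m_{sx_n,x_n,x_m}\le \max\{m_s(x_n,x_n,x_n),m_s(x_m,x_m,x_m)\}\to 0$. By completeness there is $u\in X$ with $x_n\to u$, and exactly as in Theorem \ref{one} the identity $M_{sx_n,x_n,u}-m_{sx_n,x_n,u}=|m_s(x_n,x_n,x_n)-m_s(u,u,u)|\to 0$ combined with $m_s(x_n,x_n,x_n)\to 0$ forces $m_s(u,u,u)=0$; consequently $m_{sx_n,x_n,u}=0$ and hence $m_s(x_n,x_n,u)\to 0$.

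The main obstacle is proving $Tu=u$, since the continuity shortcut of Theorem \ref{one} is unavailable: (\ref{cont2}) only bounds $m_s(x_{n+1},x_{n+1},Tu)$ by $\lambda\big[m_s(x_n,x_n,x_{n+1})+m_s(u,u,Tu)\big]$, and the second summand is not yet known to vanish. Instead I would take $\limsup$ to get $\limsup_n m_s(x_{n+1},x_{n+1},Tu)\le \lambda\, m_s(u,u,Tu)$, and then feed this into inequality (4) applied to the entries $u,u,Tu$ with auxiliary point $t=x_{n+1}$. Since $m_s(u,u,x_{n+1})\to 0$ and $m_{su,u,Tu}=0$ (because $m_s(u,u,u)=0$), passing to the limit yields $m_s(u,u,Tu)\le \lambda\, m_s(u,u,Tu)$, and as $\lambda<1$ this forces $m_s(u,u,Tu)=0$. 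Taking $x=y=u$ in (\ref{cont2}) then gives $m_s(Tu,Tu,Tu)\le 2\lambda\, m_s(u,u,Tu)=0$, so $m_s(u,u,u)=m_s(Tu,Tu,Tu)=m_s(u,u,Tu)=0$ and condition (1) of Definition \ref{def of Ms} delivers $Tu=u$.

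Finally, for uniqueness I first note that any fixed point $w$ satisfies $m_s(w,w,w)=m_s(Tw,Tw,Tw)\le 2\lambda\, m_s(w,w,w)$, whence $m_s(w,w,w)=0$ since $2\lambda<1$; this also records the asserted $m_s(u,u,u)=0$. Then for two fixed points $u,v$, inequality (\ref{cont2}) gives $m_s(u,u,v)=m_s(Tu,Tu,Tv)\le \lambda\big[m_s(u,u,u)+m_s(v,v,v)\big]=0$, so that $m_s(u,u,u)=m_s(v,v,v)=m_s(u,u,v)=0$, and condition (1) again yields $u=v$.
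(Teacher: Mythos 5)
Your proof is correct, but it diverges from the paper's at the two structural points, in one case to your advantage. For Cauchyness, the paper never touches axiom (4) of Definition \ref{def of Ms}: it applies (\ref{cont2}) directly with $x=x_{n-1}$, $y=x_{m-1}$ to get $m_{s}(x_{n},x_{n},x_{m})\le \lambda[m_{s_{n-1}}+m_{s_{m-1}}]\rightarrow 0$ -- the classical Kannan trick, which makes the whole distance (not just the difference $a_{n,m}$) vanish and is shorter than your telescoping $a_{n,m}\le 2a_{n,n+1}+a_{n+1,m}$; your route works too, since $a_{m,m}=0$ and $\sum h^{k}$ converges, but it is more machinery than needed here. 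For the fixed-point step, the comparison reverses: the paper passes to the limit in $m_{s}(Tx_{n},Tx_{n},u)-m_{{s}{Tx_{n},Tx_{n},u}}$ and declares it equal to $m_{s}(Tu,Tu,u)-m_{{s}{Tu,Tu,u}}$, which via Lemma \ref{sequential} presupposes $Tx_{n}\rightarrow Tu$; in Theorem \ref{one} that convergence was extracted from the Banach condition, but under (\ref{cont2}) the paper never verifies it (the bound $m_{s}(Tx_{n},Tx_{n},Tu)\le\lambda[m_{s_{n}}+m_{s}(u,u,Tu)]$ does not tend to $0$ a priori, exactly the obstacle you identified), and the ensuing display beginning ``assume that'' is not a valid argument as written. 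Your replacement -- $\limsup_{n} m_{s}(x_{n+1},x_{n+1},Tu)\le\lambda m_{s}(u,u,Tu)$ fed into axiom (4) with $t=x_{n+1}$, using $m_{{s}{u,u,Tu}}=0$, to force $(1-\lambda)m_{s}(u,u,Tu)\le 0$ -- is rigorous and self-contained, and your prior extraction of $m_{s}(u,u,u)=0$ from the second completeness condition (as in Theorem \ref{one}) is precisely what makes it run; so your proposal actually repairs the weakest point of the published proof. The uniqueness arguments coincide.
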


\begin{proof}
Let $x_{0}\in X$ be arbitrary. Consider the sequence $\{x_{n}\}$
defined by $x_{n}=T^{n}x_{0}$ and $m_{s_{n}}=m_{s}(x_{n},x_{n},x_{n+1}).$
Note that if there exists a natural number $n$ such that
$m_{s_{n}}=0,$ then $x_{n}$ is a fixed point of $T$ and we are done.
So, we may assume that $m_{s_{n}}>0,$ for $n\ge 0.$ By
(\ref{cont2}), we obtain for any $n\ge 0,$
\begin{align*}
m_{s_{n}}& =m_{s}(x_{n},x_{n},x_{n+1})=m_{s}(Tx_{n-1},Tx_{n-1},Tx_{n})\\
&\le \lambda[ m_{s}(x_{n-1},x_{n-1}, Tx_{n-1})+ m_{s}(x_{n},x_{n}, Tx_{n})] \\
& = \lambda[ m_{s}(x_{n-1},x_{n-1}, x_{n})+ m_{s}(x_{n},x_{n}, x_{n+1})]\\
& = \lambda [m_{s_{n-1}}+m_{s_{n}}].
\end{align*}
Hence, $m_{s_{n}}\le \lambda m_{s_{n-1}}+ \lambda m_{s_{n}},$ which implies $m_{s_{n}}\le \mu
m_{s_{n-1}},$ where $\mu=\frac{\lambda}{1 -\lambda}<1$ as $\lambda
\in [0,\frac{1}{2}).$ By repeating this process we get
$$m_{s_{n}}\le \mu^{n} m_{s_{0}}.$$ Thus, $\lim_{n\rightarrow
\infty}m_{s_{n}}=0.$
By (\ref{cont2}), for all natural numbers
$n,m$ we have
\begin{align*}
m_{s}(x_{n},x_{n},x_{m})& =m_{s}(T^{n}x_{0},T^{n}x_{0},T^{m}x_{0})=m_{s}(Tx_{n-1},Tx_{n-1}, Tx_{m-1})\\
&\le \lambda[ m_{s}(x_{n-1},x_{n-1}, Tx_{n-1})+ m_{s}(x_{m-1},x_{m-1}, Tx_{m-1})] \\
& = \lambda[ m_{s}(x_{n-1},x_{n-1}, x_{n})+ m_{s}(x_{m-1},x_{m-1}, x_{m})]\\
& = \lambda [m_{s_{n-1}}+m_{s_{m-1}}].
\end{align*}
Since  $\lim_{n\rightarrow
\infty}m_{s_{n}}=0,$ for every $\epsilon>0$, we can find a natural
number $n_{0}$ such that $m_{s_{n}}<\frac{\epsilon}{2}$ and
$m_{s_{m}}<\frac{\epsilon}{2}$ for all $m,n >n_{0}.$ Therefore, it
follows that
$$
m_{s}(x_{n},x_{n},x_{m})\le \lambda [m_{s_{n-1}}+m_{s_{m-1}}]< \lambda[\frac{\epsilon}{2}+\frac{\epsilon}{2}]<
\frac{\epsilon}{2}+\frac{\epsilon}{2}=\epsilon\ \   \text{for all }
n,m>n_{0}.
$$
This implies that
$$
m_{s}(x_{n},x_{n},x_{m})-m_{{s}{x_{n},x_{n},x_{m}}}<\epsilon \ \   \text{for all }
n,m>n_{0}.
$$
Now, for all natural numbers $n,m$ we have
\begin{align*}
M_{{s}{x_{n},x_{n},x_{m}}}& =m_{s}(Tx_{n-1},Tx_{n-1}, Tx_{n-1})\\
&\le \lambda[ m_{s}(x_{n-1},x_{n-1}, Tx_{n-1})+ m_{s}(x_{n-1}, x_{n-1}, Tx_{n-1})] \\
& = \lambda[ m_{s}(x_{n-1}, x_{n-1}, x_{n})+ m_{s}(x_{n-1},x_{n-1},  x_{n})]\\
& = \lambda [m_{s_{n-1}}+m_{s_{n-1}}]\\
& = 2\lambda m_{s_{n-1}}.
\end{align*}
As  $\lim_{n\rightarrow \infty}m_{s_{n-1}}=0,$ for every $\epsilon>0$ we can find a natural
number $n_{0}$ such that $m_{s_{n}}<\frac{\epsilon}{2}$ and
for all $m,n >n_{0}.$ Therefore, it
follows that $$ M_{{s}{x_{n},x_{n},x_{m}}}\le \lambda
[m_{s_{n-1}}+m_{s_{n-1}}]<
\lambda[\frac{\epsilon}{2}+\frac{\epsilon}{2}]<
\frac{\epsilon}{2}+\frac{\epsilon}{2}=\epsilon\ \   \text{for all }
n,m>n_{0},$$ which implies that
$$M_{{s}{x_{n},x_{n},x_{m}}}-m_{{s}{x_{n},x_{n},x_{m}}}<\epsilon \ \   \text{for all }
n,m>n_{0}.$$
Thus, $\{x_{n}\}$ is an $M_{s}$-Cauchy sequence in $X.$
Since $X$ is complete, there exists $u \in X $ such that
$$
\lim_{n\rightarrow\infty} m_{s}(x_{n},x_{n},u)-m_{{s}{x_{n},x_{n},u}}=0.
$$
Now, we show that $u$ is a fixed point of $T$ in $X.$ For any natural number $n$ we have,
\begin{align*}
\lim_{n\rightarrow \infty} m_{s}(x_{n},x_{n},u)-m_{{s}{x_{n},x_{n},u}}&=0\\
&= \lim_{n\rightarrow \infty} m_{s}(x_{n+1},x_{n+1},u)-m_{{s}{x_{n+1},x_{n+1},u}}\\
&=\lim_{n\rightarrow \infty} m_{s}(Tx_{n},Tx_{n},u)-m_{{s}{Tx_{n},Tx_{n},u}} \\
&= m_{s}(Tu,Tu,u)-m_{{s}{Tu,Tu,u}}.
\end {align*}

This implies that $m_{s}(Tu,Tu,u)-m_{{s}{u,u,Tu}}=0,$ and that is $m_{s}(Tu,Tu,u)= m_{{s}{u,u,Tu}}.$
Now, assume that $$m_{s}(Tu,Tu,u)=m_{s}(Tu,Tu,Tu)\le 2\lambda m_{s}(u,u,Tu)= 2\lambda m_{s}(Tu,Tu,u)<  m_{s}(u,u,Tu).$$
Thus, $$m_{s}(Tu,Tu,u)=m_{s}(u,u,u)\le m_{s}(Tu,Tu,Tu)\le 2\lambda m_{s}(u,u,Tu)<m_{s}(u,u,Tu)$$
Therefore, $Tu=u$ and thus $u$ is a fixed point of $T.$ \\

Next, we show that if $u$ is a fixed point, then $m_{s}(u,u,u)=0.$ Assume that $u$
is a fixed point of $T,$ then using the contraction (\ref{cont2}), we have
\begin{align*} m_{s}(u,u,u)
&=m_{s}(Tu,Tu,Tu)\\&\le \lambda [m_{s}(u,u,Tu)+m_{s}(u,u,Tu)]\\
&=2\lambda m_{s}(u,u,Tu)\\
&=2\lambda m_{s}(u,u,u)\\
&<m_{s}(u,u,u)\ \   \text{since }
\lambda \in [0,\frac{1}{2}); \end{align*} that is, $m_{s}(u,u,u)=0.$

Finally, To prove uniqueness, assume that $T$ has two fixed points say $u,v \in
X.$ Hence,
$$
m_{s}(u,u,v)=m_{s}(Tu,Tu,Tv)\le \lambda [m_{s}(u,u,Tu)+m_{s}(v,v,Tv)]=\lambda [m_{s}(u,u,u)+m_{s}(v,v,v)]=0,
$$
which implies that $m_{s}(u,u,v)=0=m_{s}(u,u,v)=m_{s}(u,u,v),$ and hence $u=v$ as required. \\

\end{proof}

In closing, the authors would like to bring to the reader's attention that in this interesting
$M_{s}$-metric space it is possible to add some control functions in both contractions of Theorems \ref{one}, and \ref{two}.

\begin{thm}\label{three}
Let $(X, m_{s})$ be a complete $M_s$-metric space and $T $ be a self mapping on $X$ satisfying the following
condition: for all $x,y,z\in X$
\begin{equation}\label{1}
m_s (Tx,Ty,Tz)\leq m_s (x,y,z)-\phi (m_s (x,y,z)),
\end{equation}
where $\phi :[0,\infty)\rightarrow[0,\infty)$ is a continuous and non-decreasing function and $\phi^{-1}(0)=0$ and $\phi (t)>0$ for all $t>0$. Then $T$ has a unique fixed point in $X$.
\end{thm}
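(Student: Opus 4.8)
The plan is to run the Picard iteration and combine the classical $\phi$-weak contraction scalar estimates with the two bookkeeping quantities peculiar to the $M_s$-metric. Fix $x_0 \in X$, set $x_n = T^n x_0$, and write $a_n = m_s(x_n,x_n,x_{n+1})$ and $b_n = m_s(x_n,x_n,x_n)$. Applying \eqref{1} with $x=y=x_{n-1},\ z=x_n$ gives $a_n \le a_{n-1} - \phi(a_{n-1}) \le a_{n-1}$, and applying it with $x=y=z=x_{n-1}$ gives $b_n \le b_{n-1} - \phi(b_{n-1}) \le b_{n-1}$. Hence both scalar sequences are non-increasing and bounded below by $0$, so they converge, say to $L$ and $L'$. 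Passing to the limit in $a_n \le a_{n-1} - \phi(a_{n-1})$ and using the continuity of $\phi$ yields $\phi(L) \le 0$, so $L = 0$ because $\phi(t) > 0$ for $t > 0$; the same argument gives $L' = 0$. Thus $a_n \to 0$ and $b_n \to 0$.

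First I would then establish that $\{x_n\}$ is $M_s$-Cauchy. Since $b_n \to 0$, the quantities $m_{s_{x_n,x_n,x_m}} = \min\{b_n,b_m\}$ and $M_{s_{x_n,x_n,x_m}} = \max\{b_n,b_m\}$ both tend to $0$, so $M_{s_{x_n,x_n,x_m}} - m_{s_{x_n,x_n,x_m}} \to 0$ automatically, and the Cauchy requirement reduces to showing $m_s(x_n,x_n,x_m) \to 0$. I would argue by contradiction: if this fails there exist $\epsilon > 0$ and indices $m_k > n_k \ge k$ with $m_s(x_{m_k},x_{m_k},x_{n_k}) \ge \epsilon$, chosen with $m_k$ minimal so that $m_s(x_{m_k-1},x_{m_k-1},x_{n_k}) < \epsilon$. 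Using the symmetry condition (3) and the quadrilateral inequality (4) of Definition \ref{def of Ms} (in the factor-$2$ form underlying Lemma \ref{sequential}), together with $a_n \to 0$, one squeezes both $m_s(x_{m_k},x_{m_k},x_{n_k})$ and $m_s(x_{m_k-1},x_{m_k-1},x_{n_k-1})$ to the common limit $\epsilon$. Applying \eqref{1} to $m_s(x_{m_k},x_{m_k},x_{n_k}) = m_s(Tx_{m_k-1},Tx_{m_k-1},Tx_{n_k-1})$ and letting $k \to \infty$ then gives $\epsilon \le \epsilon - \phi(\epsilon)$, contradicting $\phi(\epsilon) > 0$. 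This squeezing of the several distances to $\epsilon$, while juggling the raw $m_s$ against the reduced distance $m_s - m_{s_{\cdots}}$, is the step I expect to be the main obstacle.

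By completeness there is $u \in X$ with $m_s(x_n,x_n,u) - m_{s_{x_n,x_n,u}} \to 0$ and $M_{s_{x_n,x_n,u}} - m_{s_{x_n,x_n,u}} \to 0$. As in Theorem \ref{one}, the second limit equals $|b_n - m_s(u,u,u)| \to m_s(u,u,u)$, forcing $m_s(u,u,u) = 0$; then $m_{s_{x_n,x_n,u}} = 0$, so $m_s(x_n,x_n,u) \to 0$. Feeding this into \eqref{1} gives $m_s(Tx_n,Tx_n,Tu) \le m_s(x_n,x_n,u) \to 0$, that is $Tx_n \to Tu$. Since also $x_{n+1} = Tx_n \to u$, Lemma \ref{sequential} applied to the sequence $Tx_n$ and the constant sequence $u$ yields $m_s(Tu,Tu,u) - m_{s_{Tu,Tu,u}} = 0$. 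Finally, \eqref{1} gives $m_s(Tu,Tu,Tu) \le m_s(u,u,u) = 0$, so $m_s(Tu,Tu,Tu) = 0 = m_s(u,u,u)$ and hence $m_{s_{Tu,Tu,u}} = 0 = m_s(Tu,Tu,u)$; by condition (1) of Definition \ref{def of Ms} this forces $Tu = u$.

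For uniqueness, suppose $Tu = u$ and $Tv = v$. Applying \eqref{1} gives $m_s(u,u,v) = m_s(Tu,Tu,Tv) \le m_s(u,u,v) - \phi(m_s(u,u,v))$, whence $\phi(m_s(u,u,v)) \le 0$ and therefore $m_s(u,u,v) = 0$; the same contraction applied to the diagonal triples yields $m_s(u,u,u) = m_s(v,v,v) = 0$. With $m_s(u,u,u) = m_s(v,v,v) = m_s(u,u,v) = 0$, condition (1) of Definition \ref{def of Ms} gives $u = v$, completing the proof.
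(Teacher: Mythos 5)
Your proposal is correct and follows the paper's overall skeleton (Picard iteration, monotone decrease of $m_s(x_n,x_n,x_{n+1})$ with $r\le r-\phi(r)$ forcing $r=0$, a minimal-index contradiction argument for the Cauchy property, completeness plus Lemma \ref{sequential} and condition (1) of Definition \ref{def of Ms} for the fixed point, and $\phi(m_s(u,u,v))\le 0$ for uniqueness), but at the two places where the paper is weakest you take a genuinely different and better route. First, you prove early that the diagonal sequence $b_n=m_s(x_n,x_n,x_n)$ tends to $0$ by the same monotone-limit trick; the paper only extracts monotonicity and an existing finite limit (via an odd ``without loss of generality'' normalization of $m_{s_{x_n,x_n,x_m}}$), and your stronger conclusion makes $M_{s_{x_n,x_n,x_m}}-m_{s_{x_n,x_n,x_m}}\to 0$ automatic and reduces Cauchyness to $m_s(x_n,x_n,x_m)\to 0$. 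Second, your Cauchy contradiction is the classical weak-contraction closure: squeeze both $m_s(x_{m_k},x_{m_k},x_{n_k})$ and $m_s(x_{m_k-1},x_{m_k-1},x_{n_k-1})$ to $\epsilon$ and then apply (\ref{1}) to obtain $\epsilon\le\epsilon-\phi(\epsilon)$. The paper instead writes a chain ending in $\epsilon\le\epsilon+2m_s(x_{n_{k-1}},x_{n_{k-1}},x_{n_{k-1}})<\epsilon$ ``as $n\to\infty$'', which never invokes $\phi$ and at best yields $\epsilon\le\epsilon$ in the limit --- so your version is the one that actually closes the argument. The one step you leave as a sketch (the squeeze, which you flag as the main obstacle) does go through: setting $d(x,z)=m_s(x,x,z)-m_{s_{x,x,z}}$, condition (4) with $x=y$ gives $d(x,z)\le 2d(x,t)+d(z,t)$, and since $d(x_{n},x_{n+1})\le m_s(x_n,x_n,x_{n+1})\to 0$ and $b_n\to 0$, two applications of this inequality pin both distances between $\epsilon$ and $\epsilon+o(1)$, exactly as you predict. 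Your endgame ($m_s(Tu,Tu,Tu)\le m_s(u,u,u)=0$, hence $m_{s_{Tu,Tu,u}}=m_s(Tu,Tu,u)=0$ and $Tu=u$ by condition (1)) also fills in what the paper delegates to ``similarly to the proof of Theorem \ref{two}'', and your uniqueness step avoids the paper's case split by reading $m_s(u,u,v)=0$ directly off $\phi(m_s(u,u,v))\le 0$ and $\phi^{-1}(0)=0$.
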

\begin{proof}
Let $x_0 \in X$. Define the sequence $\{x_{n}\}$ in $X$ such that $x_n =T^{n-1}x_0 =Tx_{n-1}$, for all $n\in \N$. Note that if there exists an $n\in\N$ such that $x_{n+1}=x_n$, then $x_n$ is a fixed point for $T$. Without lost of generality, assume that $x_{n+1}\neq x_n$, for all $n\in\N$. Now

\begin{eqnarray}\label{eq2}
% \nonumber % Remove numbering (before each equation)
  m_s (x_n ,x_{n+1}, x_{n+1}) &=& m_s (Tx_{n-1},Tx_n ,Tx_n )\nonumber \\
   &\leq & m_s (x_{n-1} ,x_{n}, x_{n}) -\phi (m_s (x_{n-1} ,x_{n}, x_{n})) \nonumber \\
   &\leq & m_s (x_{n-1} ,x_{n}, x_{n}).
\end{eqnarray}

Similarly, we can prove that $m_s (x_{n-1} ,x_{n}, x_{n}) \leq m_s (x_{n-2} ,x_{n-1}, x_{n-1})  .$ Hence, $m_s (x_n ,x_{n+1}, x_{n+1})$ is a monotone decreasing sequence. Hence there exists $r\geq 0$ such that
$$
\lim_{n\rightarrow\infty} m_s (x_n ,x_{n+1}, x_{n+1}) =r.
$$
Now, by taking the limit as $n\rightarrow\infty$ in the inequality (\ref{eq2}), we get $r\leq r-\phi (r)$ which leads to a contradiction unless $r=0$.
Therefore,
$$
\lim_{n\rightarrow\infty} m_s (x_n ,x_{n+1}, x_{n+1}) =0. \\
$$
Suppose that  $\{x_{n}\}$ is not an $M_s$-Cauchy sequence. Then there exists an $\epsilon>0$ such that we can find subsequences $x_{m_k}$ and $x_{n_k}$ of $\{x_{n}\}$ such that
\begin{equation}\label{eq3}
 m_s (x_{n_k} ,x_{m_k} ,x_{m_k} )-m_{sx_{n_k} ,x_{m_k} ,x_{m_k}} \geq \epsilon.
\end{equation}
 Choose $n_k$ to be the smallest integer with $n_k >m_k$ and satisfies the inequality (\ref{eq3}).
 Hence, $m_s (x_{n_k} ,x_{m_{k-1}} ,x_{m_{k-1}} )-m_{sx_{n_k} ,x_{m_{k-1}} ,x_{m_{k-1}}} < \epsilon .$ \\
Now,
\begin{eqnarray*}
% \nonumber % Remove numbering (before each equation)
  \epsilon &\leq & m_s (x_{m_k} ,x_{n_k} ,x_{n_k} )-m_{sx_{m_k} ,x_{n_k} ,x_{n_k}} \\
    &\leq & m_s (x_{m_k} ,x_{n_{k-1}} ,x_{n_{k-1}} )+2m_s (x_{n_{k-1}} ,x_{n_{k-1}} ,x_{n_{k-1}} )-m_{sx_{m_k} ,x_{n_{k-1}} ,x_{n_{k-1}}} \\
    &\leq & \epsilon+2m_s (x_{n_{k-1}} ,x_{n_{k-1}} ,x_{n_{k-1}} ) \\
   &< & \epsilon,
\end{eqnarray*}
as $n\rightarrow\infty$. Hence, we have a contradiction. \\
Without lost of generality, assume that $m_{sx_n ,x_n ,x_m}= m_s (x_n ,x_n ,x_n)$. Then we have
\begin{eqnarray*}
% \nonumber % Remove numbering (before each equation)
  0\leq m_{sx_n ,x_n ,x_m}-m_{sx_n ,x_n ,x_m} &\leq & M_{sx_n ,x_n ,x_m} \\
   &=& m_s (x_n ,x_n ,x_n) \\
   &=& m_s (Tx_{n-1},Tx_{n-1},Tx_{n-1}) \\
   &\leq & m_s (x_{n-1},x_{n-1},x_{n-1})-\phi (m_s (x_{n-1},x_{n-1},x_{n-1}))  \\
   &\leq & m_s (x_{n-1},x_{n-1},x_{n-1}) \\
   &\vdots&  \\
   &\leq& m_s (x_{0},x_{0},x_{0})
\end{eqnarray*}
Hence, $\displaystyle\lim_{n\rightarrow\infty}m_{sx_n ,x_n ,x_m}-m_{sx_n ,x_n ,x_m}$ exists and finite. Therefore, $\{x_{n}\}$ is an $M_s$-Cauchy sequence.
Since $X$ is complete, the sequence $\{x_{n}\}$ converges to an element $x\in X$; that is,
\begin{eqnarray*}
% \nonumber % Remove numbering (before each equation)
  0 &=&\lim_{n\rightarrow\infty} m_s (x_n ,x_n ,x)-m_{sx_n ,x_n ,x}  \\
   &=& \lim_{n\rightarrow\infty} m_s (x_{n+1} ,x_{n+1} ,x)-m_{sx_{n+1} ,x_{n+1} ,x} \\
   &=& \lim_{n\rightarrow\infty} m_s (Tx_n,Tx_n,x)-m_{sTx_n,Tx_n,x} \\
   &=& m_s (Tx,Tx,x)-m_{sTx,Tx,x} .
\end{eqnarray*}
Similarly to the proof of Theorem \ref{two}, it is not difficult to show that this implies that,  $Tx=x$ and so $x$ is a fixed point. \\
Finally, we show that $T$ has a unique fixed point. Assume that there are two fixed points $u,v\in X$ of $T$. If we have $m_s (u,u,v)>0$, then Condition (\ref{1}) implies that
$$
m_s (u,u,v)=m_s (Tu,Tu,Tv)\leq m_s (u,u,v)-\phi (m_s (u,u,v))<m_s (u,u,v),
$$
and that is a contradiction. Therefore, $m_s (u,u,v)=0$ and similarly $m_s (u,u,u)=M_s (v,v,v)=0$ and thus $u=v$ as desired.
\end{proof}

In closing, is it possible to define the same space but without the symmetry condition, (i.e. $m_{s}(x,x,y)\neq m_{s}(y,y,x)?$) If possible, what kind of results can be obtained in such space?

\end{document}